\newtheorem{theorem}{Theorem}
\newtheorem{corollary}[theorem]{Corollary}
\theoremstyle{remark}
\newtheorem{example}[theorem]{Example}
\begin{document}

\title{Periodic spanning surfaces of periodic knots}
\begin{abstract}
Edmonds \cite{Edmonds} famoulsy proved that every periodic knot of genus $g$ possesses an equivariant Seifert surface of genus $g$. We show that this is not true if one instead considers nonorientable spanning surfaces of a periodic knot. We demonstrate by example that the difference between the first Betti number of an equivariant and a nonequivariant nonorientable spanning surface of a periodic knot, can be arbitrarily large.   
\end{abstract}
%
%
\thanks{The author was partially supported by the Simons Foundation, Award ID 524394, and by the NSF, Grant No. DMS--1906413. }
\author{Stanislav Jabuka}
\email{jabuka@unr.edu}
\address{Department of Mathematics and Statistics, University of Nevada, Reno NV 89557, USA.}
\maketitle
A knot $K$ in $S^3$ is said to be {\em periodic} if there exists an integer $p\ge 2$, a diffeomorphism $f:S^3\to S^3$ of order $p$ that preserves the knot $K$, and whose fixed point set Fix$(f)$ is diffeomorphic to $S^1$. In this case we say that $K$ is $p$-periodic, that $p$ is a period of $K$,  and we call Fix$(f)$ the {\em axis of $f$}. 
See \cite{JabukaNaik} for more background on periodic knots. 

In \cite{Edmonds} Edmods famously proved that if $K$ is a $p$-periodic knot of genus $g$, then there exists a Seifert surface $\Sigma$ for $K$ of genus $g$ that is invariant under the diffeomorphism $f$. Said differently, if we define the {\em $p$-periodic} (or {\em equivariant}) {\em 3-genus $g_{3,p}(K)$ of a $p$-periodic knot $K$} as 
$$g_{3,p}(K) = \min \{ g\ge 0 \, |\,   \text{ $K$ possesses an $f$-invariant Seifert surface of genus $g$}  \},$$ 
then Edmonds' theorem can be seen as saying that $g_3(K) = g_{3,p}(K)$ for every $p$-periodic knot $K$ (with $g_3(K)$ being the Seifert genus of $K$). 

The goal of this note is to show that if one considers nonorientable spanning surfaces for periodic knots instead, the analogue of Edmonds' theorem is not true. To state our result, we recall the definition of the {\em nonorientable (nonequivariant) 3-genus $\gamma_3(K)$}, and we define the {\em $p$-periodic} (or {\em equivariant}) {\em  nonorientable 3-genus $\gamma_{3,p}(K)$} of a $p$-periodic knot $K$ as 
\begin{align*}
 \gamma_{3}(K) & = \min \{ b_1(\Sigma)  \, |\,   \text{ $\Sigma$ is a nonorienatble spanning surface for  $K$}  \}, \cr
 \gamma_{3,p}(K) & = \min \{ b_1(\Sigma)  \, |\,   \text{ $\Sigma$ is an $f$-invariant nonorienatble spanning surface for $K$}  \}.
\end{align*}
We leave it as an easy exercise to show that every periodic knot has an equivariant nonorientable spanning surface, and thus the definition of $\gamma_{3,p}(K)$ is well posed. It is also not hard to show that $\gamma_{3,p}(K) \le 2g_3(K) +p$ for any $p$-periodic knot $K$.  
\begin{theorem} \label{main}
Let $K$ be a $p$-periodic knot with $p\ge 3$ and with $\gamma_3(K)\ge 2$. Then $\gamma_{3,p}(K) \ge p$. 
\end{theorem}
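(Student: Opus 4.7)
The plan is to pass to the quotient by the cyclic group $\langle f\rangle$ and compare Euler characteristics upstairs and downstairs. Let $\Sigma$ be any $f$-invariant nonorientable spanning surface for $K$, put $A = \mathrm{Fix}(f)$, and let $\bar\Sigma := \Sigma/\langle f\rangle$ be the quotient surface, whose boundary is $\bar K := K/\langle f\rangle$. Since $K\cap A = \emptyset$, the action of $f$ on $K$ is free, so $\bar K$ is a single circle and $\bar\Sigma$ is a connected surface with exactly one boundary component. Setting $n = |\Sigma\cap A|$, each such intersection point is fixed by $f$, and the quotient map $\pi\colon\Sigma\to\bar\Sigma$ is a $p$-fold cyclic branched cover with $n$ branch points, each of full ramification $p$.

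The Euler-characteristic formula for branched covers gives $\chi(\Sigma) = p\,\chi(\bar\Sigma) - n(p-1)$. Using $\chi = 1-b_1$ for any connected compact surface with one boundary component, this rearranges to the key relation
\[
b_1(\Sigma) \;=\; p\cdot b_1(\bar\Sigma) + (n-1)(p-1).
\]
The crucial observation is that $\bar\Sigma$ is itself nonorientable: the local model of $\pi$ at each branch point is $z\mapsto z^p$, which is orientation-preserving, so any branched cover of an orientable surface is orientable. Thus $b_1(\bar\Sigma)\ge 1$, and whenever $n\ge 1$ the relation immediately yields $b_1(\Sigma)\ge p\cdot b_1(\bar\Sigma)\ge p$, as desired.

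The case $n = 0$ is where the hypothesis $\gamma_3(K)\ge 2$ actually enters. Here the relation becomes $b_1(\Sigma) = p\,b_1(\bar\Sigma) - (p-1)$, and I need $b_1(\bar\Sigma)\ge 2$. I would rule out $b_1(\bar\Sigma) = 1$: in that case $\bar\Sigma$ is a M\"obius band, whose unique connected unbranched $p$-fold cover is a M\"obius band when $p$ is odd (so $\Sigma$ is a M\"obius band, forcing $\gamma_3(K)\le 1$ and contradicting the hypothesis) and an annulus when $p$ is even (so $\partial\Sigma = K$ has two components, contradicting that $K$ is a knot). Hence $b_1(\bar\Sigma)\ge 2$ and the relation yields $b_1(\Sigma)\ge p+1\ge p$.

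The main obstacle is the orientability step, i.e.\ certifying that $\bar\Sigma$ inherits nonorientability from $\Sigma$; the rest is a mechanical use of the branched-cover Euler characteristic formula together with the classification of surfaces of small complexity. A secondary point of care is the bookkeeping that $\bar\Sigma$ really is a connected surface with exactly one boundary component $\bar K$, and that the branching at each point of $\Sigma\cap A$ is of full order $p$; once these structural facts are recorded, the bound falls directly out of the displayed relation.
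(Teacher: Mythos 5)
Your proposal is correct and follows essentially the same route as the paper: pass to the quotient $\bar\Sigma = \Sigma/\langle f\rangle$, apply the branched-cover Euler characteristic formula to get $b_1(\Sigma)-1 = p\bigl(b_1(\bar\Sigma)-1\bigr)+(p-1)n$, and run a case analysis using the nonorientability of $\bar\Sigma$ together with the hypothesis $\gamma_3(K)\ge 2$. The only differences are cosmetic (you split on $n$ where the paper splits on $b_1(\bar\Sigma)$, and you supply the justification that $\bar\Sigma$ is nonorientable, which the paper simply asserts).
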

\begin{proof}
Let $f:S^3\to S^3$ be a diffeomorphism that facilitates the $p$-periodicity of $K$ and let $A=$Fix$(f)$ be its axis. Let further $\Sigma \subset S^3$ be a nonorientable $f$-invariant spanning surface for $K$ and let $\overline \Sigma \subset S^3$ be the quotient of $\Sigma$ by the action of $\mathbb Z_p$ generated by $f$, and note that $\overline \Sigma$ is nonorientable. Then $\Sigma\to \overline \Sigma$ is a $p$-fold cyclic cover, branched along $\lambda \ge 0$ points, with $\lambda$ being the number of points in $\Sigma \cap A$. A straightforward computation of Euler characteristics  gives 
\begin{equation}  \label{EquationEulerCharacteristics}
\chi(\Sigma)   = p\cdot \chi (\overline \Sigma)  - (p-1)\lambda.
\end{equation}   
Write $b_1(\Sigma) = a$ and $b_1(\overline \Sigma) = b$. The assumption $\gamma_3(K) \ge 2$ forces $a\ge 2$, while by definition $b\ge 1$ and $\lambda \ge 0$. Equation \eqref{EquationEulerCharacteristics} then becomes 
\begin{equation} \label{EquationOfBAndLambda}
a-1 = p(b-1) +(p-1)\lambda.
\end{equation}
If $b=1$, we obtain $a-1 = (p-1)\lambda$ forcing $\lambda >0$ since $a\ge 2$. This in turn forces the inequality $a-1\ge p-1$ or $a\ge p$. If $b\ge 2$ then \eqref{EquationOfBAndLambda} implies $a-1\ge p$. Thus, in either case we find $a\ge p$ and hence $\gamma_{3,p}(K_p) \ge p$, since $\Sigma$ was an arbitrary equivariant nonorientable spanning surface for $K$. 
\end{proof} 
\begin{corollary} \label{CorollaryOne}
The difference between the equivariant and nonequivariant nonorientable 3-genera of a periodic knot can become arbitrarily large. Specifically, for every integer $p\ge 3$ there exists a $p$-periodic knot $K_p$ with 
$$\gamma_3(K_p) = 2 \qquad \text{ and } \qquad \gamma_{3,p}(K_p) \ge p.$$
\end{corollary}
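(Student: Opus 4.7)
The strategy is to apply Theorem~\ref{main} to a family of explicit examples. Since Theorem~\ref{main} already gives $\gamma_{3,p}(K) \ge p$ for every $p$-periodic knot $K$ satisfying $\gamma_3(K) \ge 2$, the entire content of Corollary~\ref{CorollaryOne} reduces to the problem of exhibiting, for every integer $p \ge 3$, one concrete $p$-periodic knot $K_p$ with $\gamma_3(K_p) = 2$.

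For the construction, the natural approach is to use the branched-cover viewpoint on periodic knots. I would fix a two-component link $J \cup U \subset S^3$ in which $U$ is an unknot and $\mathrm{lk}(J,U) = 1$, and take $K_p$ to be the preimage of $J$ in the $p$-fold cyclic cover of $S^3$ branched along $U$. Because of the linking-number condition the preimage is connected, and by construction $K_p$ is $p$-periodic with axis equal to the preimage of $U$. A cleaner alternative is to single out a well-known periodic family such as certain $(p,q)$-torus knots, or fully symmetric pretzel knots $P(a_1,\dots,a_p)$, where the $p$-periodicity is manifest and the crosscap number is amenable to direct computation. The pattern $J$ (respectively, the parameters of the torus/pretzel knot) must be arranged so that the nonorientable complexity of $K_p$ does \emph{not} grow with $p$.

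The upper bound $\gamma_3(K_p) \le 2$ would be established by exhibiting an explicit nonorientable spanning surface of first Betti number $2$, visible directly from the diagram of $K_p$. The main obstacle is the lower bound $\gamma_3(K_p) \ge 2$, i.e.\ ruling out that $K_p$ bounds a Möbius band. The natural tools here are classical: the determinant of a knot that bounds a Möbius band is of a strongly restricted form, and the signature together with Gordon--Litherland or Goeritz-form invariants provide sharp obstructions to crosscap number one. Verifying these obstructions uniformly in $p$ for the chosen family is the technical heart of the argument; once done, Theorem~\ref{main} immediately upgrades $\gamma_3(K_p) = 2$ to $\gamma_{3,p}(K_p) \ge p$, completing the proof.
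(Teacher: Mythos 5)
Your overall strategy is exactly the paper's: Theorem~\ref{main} does all the heavy lifting, and the corollary reduces to producing, for each $p\ge 3$, a $p$-periodic knot with nonorientable genus exactly $2$. However, your write-up stops short of an actual proof at precisely the point you yourself flag as ``the technical heart'': you never commit to a specific family $K_p$, and you never establish $\gamma_3(K_p)=2$ for it. The determinant and Gordon--Litherland/Goeritz obstructions you invoke are the right general-purpose tools for ruling out a M\"obius band, but until they are actually run uniformly in $p$ on a concrete family (and a genus-$2$ nonorientable spanning surface is actually exhibited for the upper bound), the corollary is not proved. The branched-cover construction you sketch is especially delicate here: a generic pattern $J$ gives no control on the crosscap number of the lift, which a priori could grow with $p$, so choosing the pattern so that ``the nonorientable complexity does not grow with $p$'' is itself the whole problem.

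The paper closes this gap with a citation rather than a computation: it takes $K_p = T(4p,\,2p-1)$, which is $p$-periodic because the periods of a torus knot $T(a,b)$ are exactly the divisors of $|a|$ and $|b|$, and it quotes Teragaito's computation of crosscap numbers of torus knots (see also Jabuka--VanCott) to get $\gamma_3(T(4p,2p-1))=2$ for all $p\ge 3$. So your instinct to look at torus knots was the right one; the missing ingredient is the known closed-form answer for their crosscap numbers, which disposes of both the upper and the lower bound at once and makes the ad hoc determinant/signature analysis unnecessary. If you want a self-contained argument instead of the citation, you would still need to pin down the family and carry out the obstruction computation you deferred.
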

\begin{proof}
Let $K_p$ be the torus knot $T(4p,2p-1)$. By \cite{Teragaito} (see also \cite{JabukaVanCott}) we obtain $\gamma_3(K_p)=2$ for all $p\ge 3$. The periods of a torus knot $T(a,b)$ are precisely the divisors of $|a|$ and $|b|$, showing that $K_p$ is $p$-periodic. Theorem \ref{main} implies that $\gamma_{3,p}(K_p) \ge p$. 
\end{proof} 
The inequality $\gamma_{3,p}(K) \ge p$ from Theorem \ref{main} is sharp as seen in the next example. 
\begin{example}
Consider the 5-periodic torus knot $K=T(5,3)$. It follows from \cite{Teragaito} that $\gamma_3(K) = 2$ (or use \cite{KnotInfo} where $T(5,3)$ is the knot $10_{124}$), showing that $K$ meets the hypothesis of Theorem \ref{main} and thus $\gamma_{3,5}(K) \ge 5$. An equivariant spanning surface $\Sigma$ for $K$ with $b_1(\Sigma) = 5$ is shown in Figure \ref{FigureFor12-124}, leading to $\gamma_{3,5}(K) = 5$. The values of $a$, $b$, $\lambda$ from the proof of Theorem \ref{main} are 5, 1, 1 respectively, and satisfy equation \eqref{EquationOfBAndLambda}. 
\end{example}
\begin{figure}
\includegraphics[width=5cm]{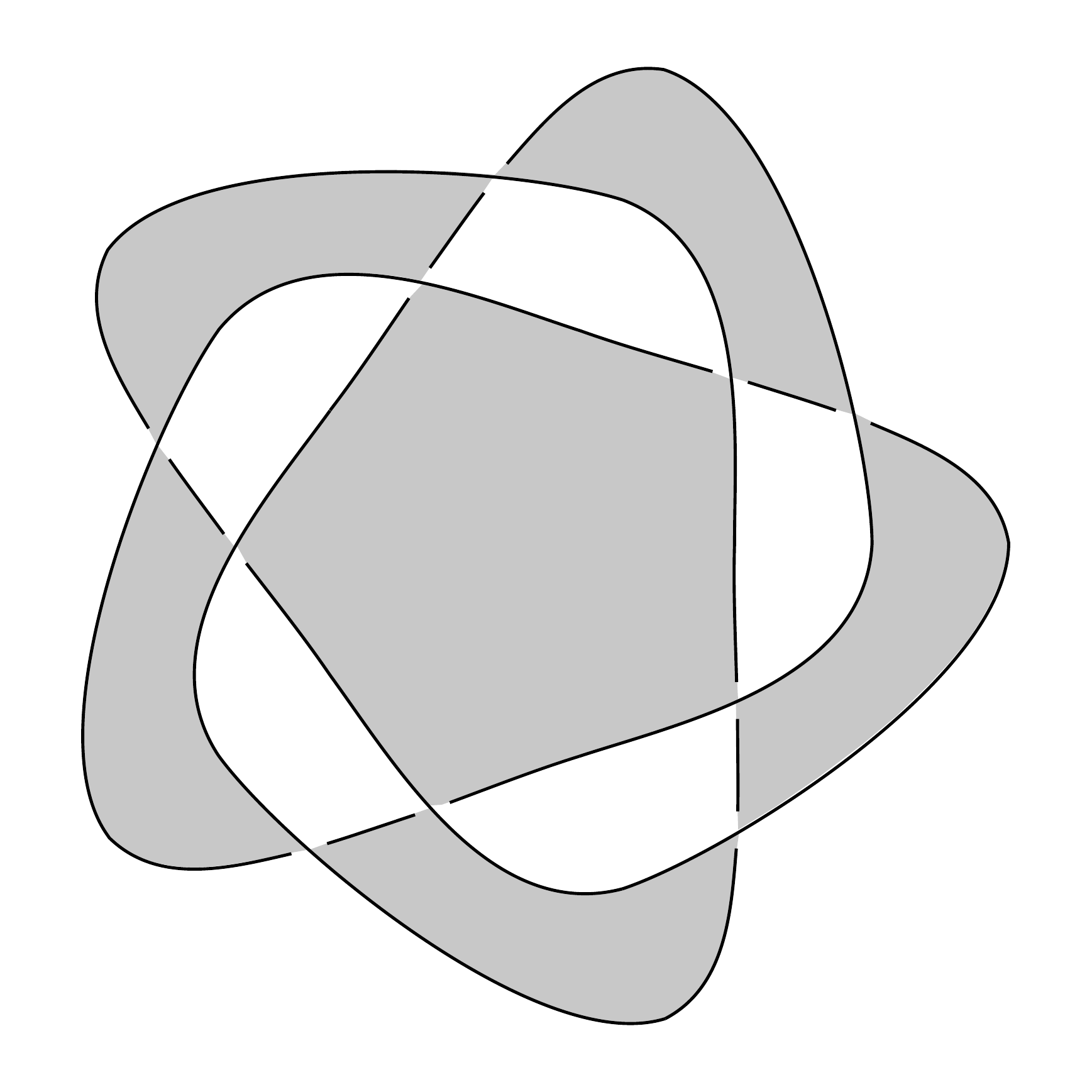}
\caption{The torus knot $T(5,3)$ shown with an equivariant nonorientable spanning surface $\Sigma$ with $b_1(\Sigma ) = 5$.   } \label{FigureFor12-124}
\end{figure}
Another important result of Edmonds' \cite{Edmonds} is the bound $p\le 2g_3(K)+1$ satisfied by any period $p$ of the knot $K$. While it was known prior to Edmonds' work that a knot may only have finitely many periods (cf. Theorem 3 in \cite{Flapan}),  the preceding inequality was the first quantitative bound on the number of possible periods of a knot. Corollary \ref{CorollaryOne} shows, as yet another contrast to Edmonds' results, that no upper bound on the periods of a knot can exist by any polynomial function in the nonorientable 3-genus. This conclusion also follows from considering the $p$-periodic alternating torus knots $T(2,p)$ for which $\gamma_3(T(2,p)) = 1 = \gamma_{3,p}(T(2,p))$, with $p\ge 3$ odd. 
\bibliographystyle{plain}
\bibliography{bibliography}

\end{document}